\theoremstyle{plain}
\newtheorem{thm}{Theorem}[section]
\newtheorem{lem}{Lemma}[section]
\newtheorem{cor}{Corollary}[section]
\newtheorem{prop}{Proposition}[section]
\newtheorem{thma}{Theorem}
\theoremstyle{proof}
\numberwithin{equation}{section}
\begin{document} 
\title[On the Diophantine equation $cx^2+p^{2m}=4y^n$]{On the Diophantine equation $cx^2+p^{2m}=4y^n$}
\author{K. Chakraborty, A. Hoque and K. Srinivas}
\address{K. Chakraborty @Kerala School of Mathematics, Kozhikode-673571, Kerala, India.}
\email{kalychak@ksom.res.in}
\address{A. Hoque @Department of Mathematics, Rangapara College, Rangapara, Sonitpur-784505, India.}
\email{ahoque.ms@gmail.com}
\address{K. Srinivas @Institute of Mathematical Sciences, HBNI, CIT Campus, Taramani, Chennai-600113, India.}
\email{srini@imsc.res.in}
\keywords{Diophantine equation, Lehmer number, Primitive divisor}
\subjclass[2010] {Primary: 11D61, 11D41; Secondary: 11Y50}
\date{\today}
\maketitle

\begin{abstract}
Let $c$ be a square-free positive integer and $p$ a prime satisfying $p\nmid c$. Let $h(-c)$ denote the class number of the imaginary quadratic field $\mathbb{Q}(\sqrt{-c})$. In this paper, we consider the Diophantine equation
$$cx^2+p^{2m}=4y^n,~~x,y\geq 1, m\geq 0, n\geq 3, \gcd(x,y)=1, \gcd(n,2h(-c))=1,$$ and we describe all its integer solutions. Our main tool here is the prominent result of Bilu, Hanrot and Voutier on existence of primitive divisors in Lehmer sequences.  
\end{abstract}

\section{Introduction}
Let $c$ and $d$ be positive integers such that $cd$ is square-free, and let $h(-cd)$ denote the class number of the imaginary quadratic field $\mathbb{Q}(\sqrt{-cd})$. There have been many papers concerned with the number of positive integer solutions $(x,n)$, for each fixed tuple $(c, d, p)$, of the generalized Ramanujan-Nagell equation 
\begin{equation}\label{eqi1}
cx^2+d=4p^n, ~~ x\geq 1, n\geq3.
\end{equation}
One of the finest known results is due to 
Bugeaud and Shorey, who proved in \cite{BS01} that \eqref{eqi1} has at most one solution for each given tuple $(c,d,p)$ with some exceptions. Many authors studied the following generalization of \eqref{eqi1}; so-called Lebesgue-Ramanujan-Nagell equation,
\begin{equation}\label{eqi2}
cx^2+d=4y^n,~~ x,y\geq 1, n\geq 3,\gcd(x,y)=1, \gcd(n, 2h(-cd))=1,
\end{equation}
where $x,y$ and $n$ are unknown integers. The known results include the following:
\begin{itemize}
\item (Persson \cite{PE49}, Stolt \cite{ST57}. If $c = 1$ and $n$ is a fixed odd prime, then there
exist only a finite number of $d$ for which \eqref{eqi2} has integer solutions $(x, y)$ and the number of solutions is also finite.

\item (Ljunggren \cite{LJ71, LJ72}). If $c = 1$ and $d$ satisfies some congruence conditions, then \eqref{eqi2} has only finite number of integer solutions $(x,y,n)$ with $n$ prime satisfying certain congruence conditions.

\item (Le \cite{LE93}).  If $n > 4\times 10^7$, then \eqref{eqi2} has no integer solution $(x, y)$.

\item (Luca et al. \cite{LTT09}). Determined all the integer solutions $(x,y,n)$ for $c=1$, $d\equiv 3\pmod 4$ and $1\leq d\leq 100$.  
\end{itemize}

In this paper, we consider more generalized Diophantine equation:
\begin{equation}\label{eqi3}
cx^2+d^m=4y^n,~~ x, y\geq 1, \gcd(cx,dy)=1, m\geq 0, n\geq 3, \gcd(n,h(-cd))=1, 
\end{equation}
where $x,y, m$ and $n$ are integers. For odd $m$, \eqref{eqi3} was well studied, and thus there are handful number of results, which include the following:
\begin{itemize}
\item (Le \cite{LE-95}). For odd $m$ and prime $n>5$, \eqref{eqi3} has only finite number of solutions $(x,y,m,n)$.  Moreover, the solutions satisfy $4y^n<\exp\exp 470$. Later, this bound was improved by Mignotte \cite{MI97}. 

\item (Bugeaud \cite{BU01}, Arif and Al-Ali \cite{AA02}).  For odd $m$ and prime $n\geq 5$, they independently, determined the solutions of \eqref{eqi3}. However, Bilu \cite{BI02} pointed out that there is a flaw in \cite{LE-95, MI97} and {\it a fortiori} Bugeaud's result in \cite{BU01}, and corrected that inaccuracy. 
\item(Bhatter et al. \cite{BHS19}, Chakraborty et al. \cite{CHS20}). For $c=1$, odd $m$ and for certain primes $d$, they completely solved \eqref{eqi3} without the coprimality conditions. 
\item (Dabrowski et al. \cite{DGS20}). For square-free integer $c> 3$, $d$ a power of an odd prime, $m$ even and $n$ an odd prime, they studied \eqref{eqi3} under the conditions `$2^{n-1}d^{m/2}\not\equiv \pm 1\pmod c$' and $\gcd(d,n)=1$. Under these conditions, they solved \eqref{eqi3} for $a\in \{7, 11, 19, 43, 67, 163\}$. Here, we completely solved  \eqref{eqi3} without these conditions and we allow $n\geq 3$ to be any integer. 

\end{itemize}

Our aim is to perform a deeper study on \eqref{eqi3} for even $m$ and prime $d$. More precisely, we completely solved the Diophantine equation 
\begin{equation}\label{eqn1}
cx^2+p^{2m}=4y^n,~~ x\geq 1, y> 1, m\geq 0, n\geq 3, \gcd(x,y)=1, \gcd(n, h(-c))=1,   
\end{equation}
in integers $x,y,m,n$ and prime $p$ with $p\nmid c$. Notably, we show that in some cases \eqref{eqn1} can be completely solved by appealing to a deep result of Bilu, Hanrot and Voutier \cite{BH01} on the existence of primitive divisors of Lehmer numbers.

\section{Statement of the results}
We first fix some notations. 
For a positive odd integer $t$, we define the following:
$$\mathcal{R}(c,u,v,t):=\sum_{j=0}^{\frac{t-1}{2}} \binom{t}{2j}u^{t-2j-1}c^{\frac{t-1}{2}-j}(-v^2)^j,$$
$$\mathcal{I}(c,u,v,t):=\sum_{j=0}^{\frac{t-1}{2}} \binom{t}{2j+1}u^{t-2j-1}c^{\frac{t-1}{2}-j}(-v^2)^j.$$
Then 
\begin{equation}\label{eqr}
\mathcal{R}(c,u,v,t)\equiv \begin{cases} u^{t-1}c^{(t-1)/2} \pmod t,\\
t (-v)^{(t-1)/2} \pmod c.
\end{cases}
\end{equation}
and 
\begin{equation}\label{eqi}
\mathcal{I}(c,u,v,t)\equiv \begin{cases} (-v)^{(t-1)/2} \pmod t,\\
(-v)^{(t-1)/2} \pmod c.
\end{cases}
\end{equation}

The main goal of this paper is to prove the following result.
\begin{thm}\label{thm}
Let $c>3$ be a square-free integer and $p$ an odd prime. 
Suppose that $n\geq 3$ is an integer such that $\gcd(n, 2h(-c))=1$. 
If $n$ has a prime factor $q$ satisfying $2^{q-1}p^m\ne|\mathcal{I}(c, u, 1, q)|$, then \eqref{eqn1} has no solutions $(c,x,y,m,n)$ with integers $x, y, m$ and $n$. 
Further, if all the prime factors $q$ of $n$ satisfy $2^{q-1}p^m=|\mathcal{I}(c, u, 1, q)|$, then for such a  prime factor $q$,  the following hold.
\begin{itemize}
\item[(I)] If $n=q$, then the solutions of \eqref{eqn1} are given by 
$$(x, y)=\left(\frac{u|\mathcal{R}(c, u,1,q)|}{2^{q-1}}, \frac{u^2c+1}{4}\right),$$
where $u\geq 1$ is an odd integer.


\item[(II)] If $n=rq$ for some odd integer $r\geq 3$,  
then \eqref{eqn1} has no solutions. 
\end{itemize}
\end{thm}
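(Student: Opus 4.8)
The plan is to convert \eqref{eqn1} into a statement about $n$-th powers in the ring of integers $\mathcal{O}$ of $K=\mathbb{Q}(\sqrt{-c})$ and then to exploit the arithmetic of an associated Lehmer pair together with the Bilu--Hanrot--Voutier theorem \cite{BH01}. First I would record the elementary constraints. Since $p$ is odd, $p^{2m}\equiv 1\pmod 8$, so reducing \eqref{eqn1} modulo $8$ forces $x$ to be odd and $c\equiv 3\pmod 4$; hence $\mathcal{O}=\mathbb{Z}\bigl[\tfrac{1+\sqrt{-c}}{2}\bigr]$, the only units of $\mathcal{O}$ are $\pm 1$ (as $c>3$), and $\tfrac{p^{m}\pm x\sqrt{-c}}{2}\in\mathcal{O}$. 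In $\mathcal{O}$ we then have
\[
\frac{p^{m}+x\sqrt{-c}}{2}\cdot\frac{p^{m}-x\sqrt{-c}}{2}=y^{n}.
\]
A prime ideal dividing both factors divides their sum $p^{m}$ and their difference $x\sqrt{-c}$; using $p\nmid c$ and $\gcd(x,y)=1$ (the case $m=0$ being immediate) the two factors are coprime, so $\bigl(\tfrac{p^{m}+x\sqrt{-c}}{2}\bigr)=\mathfrak{a}^{n}$ for an ideal $\mathfrak{a}$. As $\gcd(n,h(-c))=1$ the ideal class of $\mathfrak{a}$ is trivial, and as $n$ is odd the ambiguity $\pm 1$ is absorbed, giving
\[
\frac{p^{m}+x\sqrt{-c}}{2}=\left(\frac{u+v\sqrt{-c}}{2}\right)^{\!n}
\]
for integers $u\equiv v\pmod 2$. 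If $u,v$ were both even the right-hand side would lie in $\mathbb{Z}[\sqrt{-c}]$, which the left-hand side does not; so $u,v$ are both odd.

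Expanding the last identity and separating rational and $\sqrt{-c}$-parts exhibits $2^{n-1}p^{m}$ and $2^{n-1}x$ as the two binomial sums associated with $(u+v\sqrt{-c})^{n}$; in particular the rational coordinate of $\tfrac{u+v\sqrt{-c}}{2}$ divides $p^{m}$. The crucial step is to prove this coordinate is $\pm 1$, i.e. $\tfrac{p^{m}+x\sqrt{-c}}{2}=\bigl(\tfrac{\pm 1+u\sqrt{-c}}{2}\bigr)^{n}$. For this I would attach to $\delta:=\tfrac{\pm 1+u\sqrt{-c}}{2}$ the Lehmer pair $(\delta,\bar\delta)$ — here $(\delta+\bar\delta)^{2}=1$ is coprime to $\delta\bar\delta=y$, and $\delta/\bar\delta$ is not a root of unity because $c>3$ — and invoke \cite{BH01}: the corresponding Lehmer numbers have a primitive divisor for all sufficiently large index, and confronting this with the tight shape of $2^{n-1}p^{m}$ and $2^{n-1}x$ (so that the prime content of the relevant Lehmer number is pinned down by $p$ and by $x$) forces the rational coordinate to be $\pm 1$, the finitely many remaining indices and parametric families being treated via \eqref{eqr}--\eqref{eqi} and direct computation. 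Once $\delta=\tfrac{\pm 1+u\sqrt{-c}}{2}$, the binomial theorem together with \eqref{eqr}--\eqref{eqi} identifies, up to sign, the rational coordinate of $\delta^{n}$ with $\mathcal{I}(c,u,1,n)$ and its $\sqrt{-c}$-coordinate with $u\,\mathcal{R}(c,u,1,n)$; comparing with $2^{n-1}p^{m}$ and $2^{n-1}x$ gives
\[
2^{n-1}p^{m}=|\mathcal{I}(c,u,1,n)|,\qquad x=\frac{u\,|\mathcal{R}(c,u,1,n)|}{2^{n-1}},\qquad y=N(\delta)=\frac{u^{2}c+1}{4}.
\]
Running this with $n$ replaced by any prime divisor $q$ of $n$ (writing $\tfrac{p^{m}+x\sqrt{-c}}{2}$ as the $q$-th power of $\delta^{n/q}$) gives the necessary condition $2^{q-1}p^{m}=|\mathcal{I}(c,u,1,q)|$; its failure for some $q$ is the first assertion, and the case $n=q$ is precisely part (I).

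For part (II), let $n=rq$ with $r\geq 3$ odd and $q$ prime. Since $(\delta^{r})^{q}=\delta^{n}=\tfrac{p^{m}+x\sqrt{-c}}{2}$, applying the analysis above to the $q$-th root $\delta^{r}$ shows $\delta^{r}=\tfrac{\pm 1+t\sqrt{-c}}{2}$ for some odd $t$, whence $y^{r}=N(\delta^{r})=(t^{2}c+1)/4$, i.e. $ct^{2}+1=4y^{r}$ with $r\geq 3$ — an instance of \eqref{eqi3} with $m=0$. By the necessary condition just proved this would force $2^{q'-1}=|\mathcal{I}(c,t,1,q')|$ for a prime $q'\mid r$; but $\mathcal{I}(c,t,1,q')\equiv\pm 1\pmod c$ by \eqref{eqi}, so $|\mathcal{I}(c,t,1,q')|$ is either $1$ — absurd since $q'\geq 3$ — or $\geq c-1$, and in the latter case a size estimate for the leading term of $\mathcal{I}$ (aided, for large $q'$, by \cite{BH01} once more) rules out $2^{q'-1}=|\mathcal{I}(c,t,1,q')|$. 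Hence \eqref{eqn1} has no solution, proving (II). I expect the real difficulty to be the step ``rational coordinate $=\pm 1$'': making the primitive-divisor dichotomy of \cite{BH01} actually deliver it demands a careful check of the Lehmer-pair hypotheses, a complete treatment of the finitely many exceptional indices and families, and a hands-on analysis for small $c$, and it is precisely here that the hypotheses $c>3$ and $\gcd(n,2h(-c))=1$ are used.
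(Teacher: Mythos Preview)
Your overall strategy---factor in $\mathcal{O}_{\mathbb{Q}(\sqrt{-c})}$, use $\gcd(n,h(-c))=1$ to extract an $n$-th root, then bring in Bilu--Hanrot--Voutier---matches the paper. But the crucial step, showing the rational coordinate of the $n$-th root is $\pm 1$, is not carried out correctly.

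First, the argument as written is circular: you define $\delta=\tfrac{\pm 1+u\sqrt{-c}}{2}$ already with rational part $\pm 1$ and then attach the Lehmer pair to \emph{that} $\delta$, whereas what must be analysed is the general element $\alpha=\tfrac{U+V\sqrt{-c}}{2}$ with $U\mid p^{m}$ unknown. Second, and more seriously, the Lehmer pair $(\alpha,\bar\alpha)$ you choose gives
\[
\mathcal{L}_{q}(\alpha,\bar\alpha)=\frac{\alpha^{q}-\bar\alpha^{q}}{\alpha-\bar\alpha}=\frac{x\sqrt{-c}}{V\sqrt{-c}}=\frac{x}{V},
\]
and $x$ has no prescribed prime factorisation, so the primitive-divisor theorem yields nothing about $U$. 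The paper instead takes the pair $\bigl(\tfrac{u\sqrt{c}+\mu v\sqrt{-1}}{2},\tfrac{u\sqrt{c}-\mu v\sqrt{-1}}{2}\bigr)$, equivalently $(\alpha,-\bar\alpha)$ in your notation, for which $(\alpha+\beta)^{2}=-cV^{2}$ and
\[
\mathcal{L}_{q}(\alpha,-\bar\alpha)=\frac{\alpha^{q}+\bar\alpha^{q}}{\alpha+\bar\alpha}=\frac{p^{m}}{U}.
\]
Now $U\in\{1,p,\dots,p^{m}\}$, and whenever $U>1$ the number $\mathcal{L}_{q}$ is a power of $p$ dividing $(\alpha^{2}-\beta^{2})^{2}$ (or equals $\pm 1$), hence has \emph{no} primitive divisor; Theorem~A then forces $q\le 30$, and the finitely many survivors are eliminated via Voutier's tables and the Fibonacci/Lucas square results (Lemmas~\ref{lemVO}--\ref{lemfl}). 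This is the missing mechanism.

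For part (II) you reach $ct^{2}+1=4y^{r}$ and then attempt an ad hoc size estimate on $\mathcal{I}(c,t,1,q')$. That sketch is not a proof: cancellation in the alternating sum $\mathcal{I}$ makes a clean lower bound delicate. The paper disposes of $ct^{2}+1=4y^{r}$ in one line by quoting Ljunggren's theorem (Lemma~\ref{lemLJ}), which applies because $\gcd(r,2h(-c))=1$.
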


\subsection*{Remarks} Here, we make the following comments on Theorem \ref{thm}. 
\begin{itemize}
\item[(I)] For $c=1,2$, reading \eqref{eqn1} modulo $4$ we see that it has no solutions.  
\item[(II)] For $p=2$, \eqref{eqn1} can be written as $cX^2+2^{2m-2}=y^n$. This equation was deeply investigated by many authors; for instance,  Le \cite{LE95}, Muriefah \cite{MU01}, Wang and Wang \cite{YT11}, Luca and Soydan \cite{LS12} and Soydan and Cangul \cite{SC14}.
\item[(III)] The condition `$2^{q-1}p^m\not\equiv |\mathcal{I}(c, u, 1, q)|$' can be converted in to a stronger conditions `$p^m\not\equiv \pm 1\pmod q$' or `$2^{q-1}p^m\not\equiv \pm 1\pmod c$' using \eqref{eqi}.  
\end{itemize}

We now assume that $p$ is an odd prime and $n$ is an odd integer such that $p\mid n$. Then $p^m\ne \pm 1\pmod p$ for any integer $m\geq 1$. Also for $m=0$, \eqref{eqn1} becomes $cx^2+1=4(y^{n/p})^p$, which has no solutions by Lemma \ref{lemLJ}. 
Therefore, Theorem \ref{thm} (using (II) of Remarks) yields the following straightforward corollary.
\begin{cor}\label{cor1}
Let $c$ and $p$ be as in Theorem \ref{thm}. Assume that $n\geq 3$ is an odd integer such that $p\mid n$ and $\gcd(n, h(-c))=1$. The the Diophantine equation
$$cx^2+p^{2m}=4y^n,~~ x\geq 1, y>1, \gcd(x,y)=1,$$
has no solutions. In particular, the Diophantine equation 
$$cx^2+p^{2m}=4y^p,~~ x\geq 1, y>1, p\geq 3, \gcd(x,y)=1,$$
has no solutions.
\end{cor}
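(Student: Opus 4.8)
The plan is to derive Corollary \ref{cor1} immediately from Theorem \ref{thm}, part (III) of the Remarks, and the auxiliary Lemma \ref{lemLJ}; no genuinely new argument is needed. Since $n$ is odd with $p\mid n$, the prime $q:=p$ is a prime factor of $n$; and since $n$ is odd with $\gcd(n,h(-c))=1$ we have $\gcd(n,2h(-c))=1$, so Theorem \ref{thm} applies to \eqref{eqn1} with $c,p$ as given.

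Next I would treat the case $m\geq 1$. By Remark (III) it suffices to verify the stronger congruence condition $p^m\not\equiv\pm 1\pmod q$ for our choice $q=p$, and this is clear: $p^m\equiv 0\pmod p$ for every $m\geq 1$, while $0\not\equiv\pm 1\pmod p$ because $p\geq 3$. Hence $q=p$ is a prime factor of $n$ for which the exceptional condition $2^{q-1}p^m\ne|\mathcal{I}(c,u,1,q)|$ of Theorem \ref{thm} holds (for any admissible $u$, since by \eqref{eqi} one always has $|\mathcal{I}(c,u,1,q)|\equiv\pm 1\pmod q$ whereas $2^{q-1}\equiv 1\pmod q$). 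The first assertion of Theorem \ref{thm} then shows that \eqref{eqn1} has no solutions.

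Finally I would dispose of $m=0$, which has to be handled separately because then $p^m=1$ and the congruence argument above collapses. Here \eqref{eqn1} reads $cx^2+1=4y^n$; writing $n=pk$ (legitimate since $p\mid n$) and setting $Y:=y^k$ gives $cx^2+1=4Y^p$ with $Y>1$ (as $y>1$) and $\gcd(x,Y)=1$ (as $\gcd(x,y)=1$), and Lemma \ref{lemLJ} rules out such a solution. Taking $k=1$, i.e. $n=p$, recovers the ``in particular'' statement, the hypothesis $p\mid n$ being automatic in that case (and, as throughout, $p\nmid h(-c)$ is what lets one invoke Theorem \ref{thm}). There is no real obstacle in the proof; the only point worth stressing is that passing through the congruence reformulation of Remark (III) is exactly what makes the choice $q=p$ instantly effective when $m\geq 1$, while the value $m=0$ genuinely lies outside that argument and must be settled by the classical Ljunggren-type Lemma \ref{lemLJ}.
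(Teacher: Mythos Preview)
Your proof is correct and follows essentially the same route as the paper: split according to whether $m\geq 1$ (where the congruence reformulation of Remark~(III) with $q=p$ gives $p^m\equiv 0\not\equiv\pm1\pmod p$, so Theorem~\ref{thm} applies) or $m=0$ (where the equation reduces to $cx^2+1=4(y^{n/p})^p$ and Lemma~\ref{lemLJ} finishes). Your write-up is in fact slightly more careful than the paper's, since you explicitly note that $n$ odd together with $\gcd(n,h(-c))=1$ yields $\gcd(n,2h(-c))=1$.
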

Assume that $\mathfrak{S}=\{7, 11,15,19,35,39,43,51,55,67,91,95,111,115,123,155,163,183, \\
187,195,203,219,235,259,267, 29,295,299,323,355,371,395,399,403,407,427,435,471,\\
483, 555,559,579,583,595,627,651,663,667,715,723,763,791,795,799,895,903,915,939,\\
943,955,979,987,995,1003,1015,1023,1027,1043,1047,1119,1131,1139,1155,1159,1195,\\
1227,1239,1243,1299,1339,1379,1387, 1411,1435,1443,1463,1507,1551,1555,1595,1635,\\
1651,1659,1731,1767,1771,1795,1803,1939,1943,1947,1983,1995
\}$. Then for any $c\in \mathfrak{S}$, $h(-c)\in\{1,2,4,8,16,32\}$. 
Thus one can deduce the following straightforward corollary.
\begin{cor}
Let $c\in\mathfrak{S}$, and $p\geq 3$ be an odd prime.
Then the Diophantine equation
$$cx^2+p^{2m}=4y^p,~~ x, y\geq 1, m\geq 0, \gcd(x,y)=1,$$ 
has no solutions.
\end{cor}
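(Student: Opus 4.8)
The plan is to obtain this statement as an immediate specialization of Corollary \ref{cor1}. First I would record that every $c\in\mathfrak{S}$ is a square-free integer with $c>3$ (the least element of $\mathfrak{S}$ being $7$), so that the standing hypotheses on $c$ in Theorem \ref{thm}, and hence in Corollary \ref{cor1}, are met; this is a finite, element-by-element verification. Combining this with the stated fact that $h(-c)\in\{1,2,4,8,16,32\}$ for each such $c$, we see that $h(-c)$ is a power of $2$. Since $p\geq 3$ is an odd prime, it follows at once that $\gcd(p,h(-c))=1$.

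Next I would apply Corollary \ref{cor1} with $n=p$: here $n\geq 3$ is an odd integer, $p\mid n$ holds trivially, and $\gcd(n,h(-c))=1$ by the previous step, so all the hypotheses of that corollary are in force. The ``in particular'' clause of Corollary \ref{cor1} then excludes all solutions with $y>1$. To cover the remaining possibility $y=1$ that is permitted in the present statement, note that $y=1$ would force $cx^2+p^{2m}=4$; but $x\geq 1$ gives $cx^2\geq c\geq 7$, which is impossible. Hence no solution with $y\geq 1$ can exist, which is exactly the assertion.

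I expect no genuine analytic or arithmetic obstacle here, since the argument is a straightforward reduction to Corollary \ref{cor1}. The only real labor is the routine, though somewhat lengthy, confirmation that each of the (roughly one hundred) entries of $\mathfrak{S}$ is square-free and has imaginary quadratic class number $h(-c)$ equal to a power of $2$. In practice this is best checked against a table of class numbers of imaginary quadratic fields or with a computer algebra system such as \texttt{PARI/GP}, and it is precisely this computation that dictates which $c$ are admitted into $\mathfrak{S}$.
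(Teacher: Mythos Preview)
Your proposal is correct and follows exactly the route the paper intends: the corollary is presented there as a ``straightforward'' consequence of Corollary~\ref{cor1} together with the observation that $h(-c)$ is a power of $2$ for every $c\in\mathfrak{S}$. Your explicit treatment of the boundary case $y=1$ (which the corollary's statement allows but Corollary~\ref{cor1} excludes) is a welcome addition that the paper leaves implicit.
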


We now consider an integer $n\geq 3$. Assume that $q$ be an odd prime such that $q\mid n$. Then  for any prime $p$, we have $2^{q-1}p^q\equiv \pm 1\pmod q$, which implies $p\equiv \pm 1\pmod q$. Therefore by Theorem \ref{thm} (using (II) of Remarks), one gets the following:
\begin{cor}\label{cor2}
Let $n\geq 3$ be an odd integer and $q$ its prime factor such that $q\nmid h(-c)$. If $p$ is an odd prime such that $p\not\equiv \pm 1\pmod q$, then the Diophantine equation
$$qx^2+p^{2q}=4y^n, ~~x, y\geq 1, \gcd(x,y)=1,$$
has no solutions. 
\end{cor}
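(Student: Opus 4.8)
The plan is to derive this corollary directly from Theorem \ref{thm} by specializing the parameters. Take $c=q$ --- which is square-free, being prime --- and $m=q$, so that the equation $qx^{2}+p^{2q}=4y^{n}$ is exactly \eqref{eqn1}. First I would dispose of the configurations lying outside the reach of Theorem \ref{thm}: the case $y=1$ is impossible since $p^{2q}\geq 3^{6}>4$; and if $p=q$ (which is forced when $q=3$, for then $p\not\equiv\pm1\pmod 3$ leaves only $p=3$), then reducing $qx^{2}+q^{2q}=4y^{n}$ modulo $q$ gives $q\mid y$, whence $q^{3}\mid q^{2q}$ and $q^{3}\mid 4y^{n}$ together force $q\mid x$, contradicting $\gcd(x,y)=1$. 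In the remaining range $q\geq 5$, $y\geq 2$, $p\neq q$, we have $c=q>3$, $p\nmid c$, $n$ odd with $q\mid n$ and $q\nmid h(-c)$, so the arithmetic hypotheses of Theorem \ref{thm} are in force for the prime factor $q$ of $n$.

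By the first assertion of Theorem \ref{thm} it then suffices to verify that this particular prime factor $q$ satisfies $2^{q-1}p^{m}\neq|\mathcal{I}(c,u,1,q)|$ (with $c=q$ and $m=q$) for every odd integer $u\geq 1$. Suppose, for contradiction, that $2^{q-1}p^{q}=|\mathcal{I}(q,u,1,q)|$ for some such $u$. The congruence \eqref{eqi} with $v=1$ gives $\mathcal{I}(q,u,1,q)\equiv(-1)^{(q-1)/2}\pmod q$, so $|\mathcal{I}(q,u,1,q)|\equiv\pm1\pmod q$, and therefore $2^{q-1}p^{q}\equiv\pm1\pmod q$. By Fermat's little theorem $2^{q-1}\equiv1\pmod q$ and $p^{q}\equiv p\pmod q$, so this collapses to $p\equiv\pm1\pmod q$ --- contradicting the hypothesis $p\not\equiv\pm1\pmod q$.

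Hence no such $u$ exists, the hypothesis in the first part of Theorem \ref{thm} is fulfilled, and $qx^{2}+p^{2q}=4y^{n}$ has no solution with $x,y\geq1$ and $\gcd(x,y)=1$. I do not expect a genuine obstacle here: the corollary is essentially a dictionary entry for Theorem \ref{thm}. The only points needing care are tracking the specialization $m=q$ --- so that Fermat's little theorem reduces the congruence on $p^{q}$ to one on $p$, which is where the hypothesis $p\not\equiv\pm1\pmod q$ actually enters --- and handling the excluded small cases $q=3$ and $p=q$, which violate the conditions $c>3$ and $p\nmid c$ of Theorem \ref{thm} but are killed by the elementary divisibility argument above.
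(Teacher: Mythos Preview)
Your argument is correct and follows the same route as the paper: reduce to the first clause of Theorem~\ref{thm} by showing $2^{q-1}p^{q}\neq|\mathcal{I}(q,u,1,q)|$, using the congruence \eqref{eqi} modulo $q$ together with Fermat's little theorem to force $p\equiv\pm1\pmod q$. The paper phrases this via Remark~(III) rather than invoking \eqref{eqi} directly, but the content is identical; you are in fact more careful than the paper in disposing of the boundary cases $y=1$, $q=3$, and $p=q$, which lie outside the hypotheses $c>3$, $p\nmid c$ of Theorem~\ref{thm}.
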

Note that it follows from Corollary \ref{cor2} that for distinct odd primes $p$ and $q$ satisfying $q\nmid p\pm 1$, the Diophantine equation
$$qx^2+p^{2q}=4y^q, ~~x, y\geq 1, \gcd(x,y)=1,$$
has no solutions, since $h(-q)<q$.

Theorem \ref{thm} yields the following corollary. 
\begin{cor}
Fix $p\in \{3,7,11,19,43,67,163\}$ and  $q \in \{19,43,67,163\}$. Then the Diophantine equation 
$$qx^2+p^{2m}=4y^q,~~x,y\geq1, m\geq 0, \gcd(x,y)=1,$$
has no solutions.
\end{cor}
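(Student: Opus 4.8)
Here is the plan I would follow, and where I expect the work to concentrate.

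The plan is to apply Theorem~\ref{thm} with $c=q$ and $n=q$, after first peeling off the diagonal case $p=q$. When $p=q$ the hypothesis $p\nmid c$ of \eqref{eqn1} fails, so I would treat it by hand: if $m\ge1$, then $q\mid qx^2+q^{2m}=4y^q$ forces $q\mid y$, hence $q^q\mid y^q$ and $q^{q-1}\mid x^2+q^{2m-1}$, whence $q\mid x^2$ and $q\mid\gcd(x,y)$, a contradiction; and if $m=0$ the equation reads $qx^2+1=4y^q$, which has no solution by Lemma~\ref{lemLJ}. From here on assume $p\ne q$. Then $q\in\{19,43,67,163\}$ is prime and $>3$, $p\nmid q$, and $h(-q)=1$, so $\gcd(n,2h(-c))=\gcd(q,2)=1$ and Theorem~\ref{thm} applies. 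Since $n=q$ is prime, a hypothetical solution would (by Theorem~\ref{thm}) produce an odd $u\ge1$ with $2^{q-1}p^m=|\mathcal I(q,u,1,q)|$ and $(x,y)$ of the shape in part~(I); so the whole question reduces to showing this last equation is unsolvable.

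For that I would first rewrite $\mathcal I$. Reindexing the defining sum by $k=\tfrac{q-1}{2}-j$ and using $\binom qk=\binom q{q-k}$ gives
\[
\mathcal I(q,u,1,q)=(-1)^{(q-1)/2}\sum_{k=0}^{(q-1)/2}\binom q{2k}(-q)^k u^{2k}=(-1)^{(q-1)/2}\,F_q(u),\qquad F_q(u):=\mathrm{Re}\bigl((1+u\sqrt{-q})^q\bigr),
\]
so the equation becomes $2^{q-1}p^m=|F_q(u)|$. The crux is then the purely local statement: $p\nmid F_q(u)$ for every integer $u$. To prove it I would reduce modulo $p$. Put $\varepsilon=\bigl(\tfrac{-q}{p}\bigr)$ and write $z=1+u\sqrt{-q}$, $\bar z=1-u\sqrt{-q}$, elements of $\mathbb F_p$ if $\varepsilon=1$ and of $\mathbb F_{p^2}$ if $\varepsilon=-1$. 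From $2F_q(u)=z^q+\bar z^q$, and after disposing of the two degenerate reductions where $z$ or $\bar z$ vanishes (there $F_q(u)\equiv 2^{q-1}\not\equiv0$), the vanishing $F_q(u)\equiv0\pmod p$ is equivalent to $w^q=-1$ with $w=z/\bar z$. Now $w$ lies in the cyclic group of order $p-\varepsilon$ — namely $\mathbb F_p^\times$ when $\varepsilon=1$, and the norm-one subgroup $\mu_{p+1}\subset\mathbb F_{p^2}^\times$ when $\varepsilon=-1$ — and the M\"obius map $u\mapsto z/\bar z$ on $\mathbb P^1$ sends only $u=\infty$ to $-1$, so $w\ne-1$ for every finite $u$. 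Since $p\not\equiv\pm1\pmod q$ for each relevant pair $(p,q)$ with $p\ne q$ (a one-line check: modulo $q$ the value of $p$ is, respectively, $3,7,11,5,10,11$ for $q=19$; $3,7,11,19,24,34$ for $q=43$; $3,7,11,19,43,29$ for $q=67$; $3,7,11,19,43,67$ for $q=163$ — never $\pm1$), we get $\gcd(q,p-\varepsilon)=1$, hence $-1$ is the unique solution of $w^q=-1$ in that group. Therefore $F_q(u)\not\equiv0\pmod p$; consequently $2^{q-1}p^m=|F_q(u)|$ forces $m=0$, and then the equation is $qx^2+1=4y^q$, excluded by Lemma~\ref{lemLJ}.

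The main obstacle I anticipate is exactly that local claim $p\nmid F_q(u)$: correctly locating $w=z/\bar z$ inside the cyclic group $\mu_{p-\varepsilon}$, verifying that the relevant M\"obius transformation omits the value $-1$, and handling the two boundary reductions is where the genuine (if elementary) work lies. Once it is in place, the passage to $m=0$ and the appeal to Lemma~\ref{lemLJ} are immediate, and the only extra arithmetic input is the trivial verification that $p\not\equiv\pm1\pmod q$ over the finitely many pairs in the statement.
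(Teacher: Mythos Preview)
Your argument is correct, and in fact it supplies a genuine proof where the paper offers none: the paper simply writes ``Theorem~\ref{thm} yields the following corollary'' and moves on. The obvious route through Remark~(III), namely checking $p^m\not\equiv\pm1\pmod q$, does \emph{not} close the case for arbitrary $m$ (for instance $7^3\equiv1\pmod{19}$), so something more is needed, and your local claim $p\nmid F_q(u)$ is exactly the missing ingredient. The reduction to $w=z/\bar z$ lying in the cyclic group of order $p-\varepsilon$, together with $\gcd(q,p-\varepsilon)=1$ and the observation that the M\"obius map never hits $-1$ at a finite $u$ in odd characteristic, is clean and correct; it forces $m=0$ uniformly and hands the problem to Lemma~\ref{lemLJ}. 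Your treatment of the diagonal case $p=q$ by elementary divisibility is also fine (alternatively one could quote Corollary~\ref{cor1}, since $h(-q)=1$ for these $q$).

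Two cosmetic points. First, both Theorem~\ref{thm} and Lemma~\ref{lemLJ} are stated for $y>1$, so you should remark that $y=1$ is impossible here (trivial: $q\ge19$ and $x\ge1$ give $qx^2\ge19>4$). Second, you only need $q\nmid p-\varepsilon$ for the actual sign $\varepsilon=\bigl(\frac{-q}{p}\bigr)$, but checking both signs as you do is harmless and keeps the finite verification short.
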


\section{Preliminaries}
We first recall important results on the existence of primitive divisors of Lehmer numbers. For algebraic integers $\alpha$ and $\beta$, the pair $(\alpha, \beta)$ is known as a Lehmer pair if $(\alpha + \beta)^2$ and $\alpha\beta$ are two non-zero coprime rational integers, and $\alpha/\beta$ is not a root of unity. For a given positive integer $\ell$, the $\ell$-th Lehmer number corresponds to the pair $(\alpha, \beta)$ is defined as 
$$\mathcal{L}_\ell(\alpha, \beta)=\begin{cases}
\dfrac{\alpha^\ell-\beta^\ell}{\alpha-\beta} & \text{ if } 2\nmid \ell, \vspace{1mm}\\
\dfrac{\alpha^\ell-\beta^\ell}{\alpha^2-\beta^2} & \text{ if } 2\mid \ell.
\end{cases}$$
It is known that all Lehmer numbers are non-zero rational integers. 
Note that two Lehmer pairs $(\alpha_1, \beta_1)$ and $(\alpha_2, \beta_2)$ are equivalent if $\alpha_1/\alpha_2=\beta_1/\beta_2\in \{\pm 1, \pm\sqrt{-1} \}$. A prime divisor $p$ of $\mathcal{L}_\ell(\alpha, \beta)$ is primitive if $p\mid\mathcal{L}_\ell(\alpha, \beta)$ and $p\nmid(\alpha^2-\beta^2)^2
\mathcal{L}_1(\alpha, \beta) \mathcal{L}_2(\alpha, \beta) \cdots \mathcal{L}_{\ell-1}(\alpha, \beta)$. 

Let $a=(\alpha+\beta)^2$ and $b=(\alpha-\beta)^2$. Then $\alpha=(\sqrt{a}\pm\sqrt{b})/2$ and $\beta=(\sqrt{a}\mp\sqrt{b})/2$. The pair $(a, b)$ is known as the parameter corresponding to the Lehmer pair $(\alpha, \beta)$. 
We now recall the classical result of Bilu et al. \cite[Theorem 1.4]{BH01}. 
\begin{thma}\label{thmBH}
For any integer $\ell>30$, the Lehmer numbers $\mathcal{L}_\ell (\alpha, \beta) $ have primitive divisors.
\end{thma}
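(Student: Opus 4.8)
The plan is to prove the statement in its structural contrapositive form. Call a Lehmer pair $(\alpha,\beta)$ \emph{$\ell$-defective} if $\mathcal{L}_\ell(\alpha,\beta)$ has no primitive divisor; the goal becomes to show that no Lehmer pair is $\ell$-defective once $\ell>30$. The first step is to factor the Lehmer number through homogenized cyclotomic polynomials. Writing $\zeta$ for a primitive $\ell$-th root of unity and setting $\Phi_\ell(\alpha,\beta)=\prod_{\gcd(k,\ell)=1}(\alpha-\zeta^k\beta)$, one has the standard factorization of $\mathcal{L}_\ell(\alpha,\beta)$ as a product of the $\Phi_d(\alpha,\beta)$ over the divisors $d\mid\ell$. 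The elementary but crucial lemma is that a prime dividing $\Phi_\ell(\alpha,\beta)$ is either a primitive divisor of $\mathcal{L}_\ell(\alpha,\beta)$ or else divides the index $\ell$, and in the latter case it must be the largest prime factor of $\ell$ and can occur only to a tightly controlled power. Consequently, if $(\alpha,\beta)$ is $\ell$-defective, then $|\Phi_\ell(\alpha,\beta)|$ is bounded above by a small explicit function of the largest prime factor $P(\ell)$ of $\ell$.

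Next I would derive a lower bound for $|\Phi_\ell(\alpha,\beta)|$ that grows with $\ell$, in tension with the upper bound just obtained. Taking logarithms, $\log|\Phi_\ell(\alpha,\beta)|=\sum_{\gcd(k,\ell)=1}\log|\alpha-\zeta^k\beta|$, whose dominant term is of size $\phi(\ell)\log|\alpha|$. The delicate issue is that when $\alpha/\beta$ lies close to the unit circle (so that $\alpha/\beta$ is near some root of unity), the naive estimate degrades, and the individual factors $\log|\alpha-\zeta^k\beta|$ must be treated as linear forms in the two logarithms $\log\alpha$ and $\log\beta$. Here I would invoke the sharp lower bounds for linear forms in two logarithms, in the refined Laurent--Mignotte--Nesterenko form, to guarantee that this sum cannot be too small. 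Combined with the upper bound from the previous step, this produces an inequality roughly of the shape $\phi(\ell)\log|\alpha|\ll \log P(\ell)+(\log\ell)^2$, which forces $\ell$ to lie in an explicit finite range.

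The remaining cases are then settled by direct computation: one enumerates all Lehmer pairs whose parameters $(\alpha+\beta)^2$ and $(\alpha-\beta)^2$ have bounded height, and for each $\ell$ in the finite range tests whether $\mathcal{L}_\ell(\alpha,\beta)$ admits a primitive divisor, confirming that every defective pair satisfies $\ell\le 30$ and that the exceptions form the known finite list. The main obstacle is unquestionably the analytic step: the linear-forms-in-logarithms estimate must be made sharp enough to push the effective threshold all the way down to $\ell=30$ rather than to some astronomically large value, since a crude bound would leave a computationally infeasible range to verify by hand or machine. This interplay of a sharp transcendence estimate with a careful and extensive computational verification is precisely what makes the theorem of Bilu, Hanrot and Voutier deep, and for the present purposes I would rely on the fully optimized estimates and the computational tables established in \cite{BH01}.
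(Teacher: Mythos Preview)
The paper does not prove this statement at all: it is quoted verbatim as \cite[Theorem~1.4]{BH01} and used as a black box in the proof of Proposition~\ref{propq}. Your proposal, by contrast, sketches the actual strategy of Bilu, Hanrot and Voutier (cyclotomic factorization of $\mathcal{L}_\ell$, the Stewart--Schinzel-type upper bound on $|\Phi_\ell(\alpha,\beta)|$ when no primitive divisor exists, a lower bound via linear forms in two logarithms, and a finite computational verification). That outline is broadly faithful to \cite{BH01}, but it is not what the present paper does or needs: here the theorem is an imported tool, and no argument beyond the citation is expected.
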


The following lemma is a pert of a classical result of  Voutier \cite[Theorem 1]{VO95}.
\begin{lem}\label{lemVO}
Let $\ell$ be a prime such that $7\leq\ell\leq 29$. Assume that the Lehmer numbers $\mathcal{L}_\ell(\alpha, \beta)$ have no primitive divisor, then up to equivalence, the parameters $(a, b)$ of the corresponding pair $(\alpha, \beta)$ must be given by the following:
\begin{itemize}
\item[(i)] For $\ell=7, (a, b)=(1,-7), (1, -19), (3, -5), (5, -7), (13, -3), (14, -22)$,
\item[(ii)] For $\ell=13, (a, b)=(1,-7)$.
\end{itemize}
\end{lem}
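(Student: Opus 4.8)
The plan is to treat this as the small-index portion of the Zsygmondy/primitive-divisor problem for Lehmer sequences, following the strategy of Voutier. Since $\ell$ is prime, the relevant ``primitive part'' of $\mathcal{L}_\ell(\alpha,\beta)$ is the homogenized cyclotomic value
$$\Phi_\ell(\alpha,\beta) = \prod_{j=1}^{\ell-1}\left(\alpha - \zeta_\ell^{\,j}\beta\right),$$
where $\zeta_\ell = e^{2\pi i/\ell}$, and for an odd prime $\ell$ this coincides with $\mathcal{L}_\ell(\alpha,\beta) = (\alpha^\ell-\beta^\ell)/(\alpha-\beta)$ because every nontrivial $\ell$-th root of unity is primitive. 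Thus $\mathcal{L}_\ell$ has a primitive divisor precisely when $\Phi_\ell(\alpha,\beta)$ does, and the whole question is transferred to understanding when $\Phi_\ell(\alpha,\beta)$ fails to have a primitive prime factor. Working through the parameter $(a,b)$, I would record $\alpha\beta = (a-b)/4$ and $(\alpha-\beta)^2 = b$, so that every quantity below is controlled by $a$ and $b$ alone.

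The argument then splits into an arithmetic half and an analytic half. For the arithmetic half I would prove the standard fact that every prime divisor of $\Phi_\ell(\alpha,\beta)$ is either primitive or equal to $\ell$, and that when $\ell$ is itself a (non-primitive) divisor it occurs to exactly the first power; this rests on cyclotomic congruences for $\Phi_\ell$ modulo the rank of apparition together with a lifting-the-exponent estimate. Consequently, the no-primitive-divisor hypothesis forces $|\Phi_\ell(\alpha,\beta)| \in \{1,\ell\}$, an extremely restrictive upper bound.

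For the analytic half I would derive a matching lower bound that grows with the size of the pair. In the complex-conjugate case $b<0$ (which is where all the listed solutions live), writing $\alpha = \sqrt{\rho}\,e^{i\phi}$ and $\beta = \bar\alpha$ with $\rho = \alpha\beta = (a-b)/4 > 0$, the product formula gives
$$|\Phi_\ell(\alpha,\beta)| = \rho^{(\ell-1)/2}\,2^{\ell-1}\prod_{j=1}^{\ell-1}\bigl|\sin(\phi - \pi j/\ell)\bigr|.$$
Away from the degenerate configurations the sine-product is bounded below by an absolute constant, so $|\Phi_\ell(\alpha,\beta)| \gg \rho^{(\ell-1)/2}$; combined with $|\Phi_\ell(\alpha,\beta)| \le \ell$ this bounds $\rho$, hence bounds $|a|$ and $|b|$, and reduces the classification to a finite search over admissible coprime $(a,b)$ which one enumerates and checks directly, recovering the tables in (i) and (ii). The real case $b>0$ is handled similarly and even more easily, since there $|\Phi_\ell|$ grows like $|\alpha|^{\ell-1}$, and one verifies it produces no new examples for these $\ell$.

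The main obstacle is precisely the degenerate regime excluded above: when $\phi$ is very close to a rational multiple $\pi k/\ell$, one sine factor nearly vanishes and the crude lower bound collapses, so the naive estimate no longer confines $\rho$. This is exactly where the genuine exceptional pairs arise, and controlling it is the technical heart of the proof. One must quantify how closely $\alpha/\beta$ can approximate a root of unity subject to $a,b \in \mathbb{Z}$ being coprime, which in Voutier's treatment is handled by sharp explicit estimates for $\Phi_\ell$ together with a carefully bounded, machine-assisted verification over the finitely many surviving candidates. Since the six listed pairs for $\ell=7$ and the single pair for $\ell=13$ are the only survivors, and no pair survives for the remaining primes $11,17,19,23,29$, the enumeration closes the proof.
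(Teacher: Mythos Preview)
The paper does not prove this lemma at all: it is quoted verbatim as a special case of Voutier's classification \cite[Theorem~1]{VO95} and used as a black box. So there is no ``paper's own proof'' to compare against beyond the citation itself.

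Your outline is a faithful high-level summary of the strategy behind Voutier's theorem: reduce to $|\Phi_\ell(\alpha,\beta)|\in\{1,\ell\}$ via the arithmetic of non-primitive factors, oppose this with a growth lower bound for $\Phi_\ell$ in the parameters, and finish by a bounded search. The formulas you write down ($\alpha\beta=(a-b)/4$, the sine-product expression for $|\Phi_\ell|$ in the CM case) are correct, and you correctly flag the degenerate regime where $\alpha/\beta$ is near a root of unity as the genuine difficulty.

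That said, what you have is a plan, not a proof. Every hard step is deferred: the explicit constant in the lower bound, the quantitative control of the near-root-of-unity case, and the actual enumeration that produces exactly the seven pairs listed (and certifies emptiness for $\ell\in\{11,17,19,23,29\}$) are all left as ``one verifies'' or ``machine-assisted verification.'' In Voutier's paper these steps occupy the bulk of the work and involve careful explicit inequalities plus nontrivial computation. If your intent is merely to indicate why the lemma is plausible and where it comes from, the sketch is fine; if it is meant to stand as an independent proof, the content is essentially missing. For the purposes of the present paper, citing \cite{VO95} (as the authors do) is the appropriate move.
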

Let $F_k$ denote the $k$-th term in the Fibonacci sequence defined by $F_0=0,   F_1= 1$,
and $F_{k+2}=F_k+F_{k+1}$, where $k\geq 0$ is an integer. Also let $L_k$ denote the $k$-th term in the Lucas sequence defined by  $L_0=2,  L_1=1$, and $L_{k+2}=L_k+L_{k+1}$. 
We need the following lemma, which follows from \cite[Theorem 1.3]{BH01}. 
\begin{lem}\label{lemBH}
For $\ell=3, 5$, let the Lehmer numbers $\mathcal{L}_\ell(\alpha, \beta)$ have no primitive divisor. Then up to equivalence, the parameters $(a, b)$ of the corresponding pair $(\alpha, \beta)$ must be:
\begin{itemize}
\item[(i)] For $\ell=3, (a, b)=(1+t, 1-3t) \text{ with } t\ne 1, (3^k+t, 3^k-3t) \text{ with } t\not\equiv 0\pmod 3, (k,t)\ne(1,1)$;
\item[(ii)] For $\ell=5, (a, b)=(F_{k-2\varepsilon}, F_{k-2\varepsilon}-4F_k)\text{ with } k\geq 3,  (L_{k-2\varepsilon}, L_{k-2\varepsilon}-4L_k)\text{ with } k\ne 1$;
\end{itemize}
where $t\ne 0$ and $k\geq 0$ are any integers and $\varepsilon=\pm 1$.
\end{lem}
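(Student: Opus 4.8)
The plan is to obtain the two lists directly from the complete classification of Lehmer pairs without primitive divisors due to Bilu, Hanrot and Voutier. In their terminology a Lehmer pair $(\alpha,\beta)$ is $\ell$-\emph{defective} precisely when $\mathcal{L}_\ell(\alpha,\beta)$ has no primitive divisor. Theorem \ref{thmBH} disposes of $\ell>30$ and Lemma \ref{lemVO} of $7\le\ell\le 29$, ranges in which the defective pairs are sporadic; the indices $\ell=3,5$ sought here lie instead in the small-index regime, where the defective pairs form infinite parametric families. Theorem~1.3 of \cite{BH01} records exactly these families in its tables for $\ell\le 6$, and the whole task is to specialize that theorem to $\ell\in\{3,5\}$ and transcribe the output into the parameter $(a,b)=((\alpha+\beta)^2,(\alpha-\beta)^2)$ used here.

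For $\ell=3$ I would first record $\mathcal{L}_3(\alpha,\beta)=\alpha^2+\alpha\beta+\beta^2$ together with $\mathcal{L}_1=\mathcal{L}_2=1$, so that $(\alpha,\beta)$ is $3$-defective exactly when every prime dividing $\mathcal{L}_3$ already divides $(\alpha^2-\beta^2)^2=ab$. Since $\alpha\beta=(a-b)/4$, one has $\mathcal{L}_3=(3a+b)/4$, and the defectiveness condition reduces to a short list of divisibility constraints relating $3a+b$ to $a$ and $b$. Solving these and comparing with \cite{BH01} yields the two families $(a,b)=(1+t,1-3t)$ and $(a,b)=(3^k+t,3^k-3t)$. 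The excluded values $t=0$, $t=1$, and $(k,t)=(1,1)$ are exactly those for which $(\alpha,\beta)$ fails to be a Lehmer pair: they force $\alpha\beta=0$, make $\alpha/\beta$ a root of unity, or force $(\alpha-\beta)^2=0$, respectively, and hence are correctly discarded.

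For $\ell=5$ I would read off the two Fibonacci and Lucas families from the $\ell=5$ entry of Theorem~1.3 of \cite{BH01}; their appearance reflects that $\mathcal{L}_5$ factors through the fifth cyclotomic polynomial, tying the defective pairs to the golden section. The only genuine computation is to verify, using the recurrences $F_{k+2}=F_{k+1}+F_k$ and $L_{k+2}=L_{k+1}+L_k$ and the identity $L_k^2-5F_k^2=4(-1)^k$, that for the stated parameters $(a,b)=(F_{k-2\varepsilon},F_{k-2\varepsilon}-4F_k)$ and $(a,b)=(L_{k-2\varepsilon},L_{k-2\varepsilon}-4L_k)$ the integers $a$ and $\alpha\beta=(a-b)/4$ are nonzero and coprime, so that $(\alpha,\beta)$ is a bona fide Lehmer pair, and that the restrictions $k\ge 3$ and $k\ne 1$ remove precisely the degenerate indices.

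The substantive point, and the step I would watch most carefully, is \emph{completeness}: that no further $3$- or $5$-defective pair lives outside the quoted families. This exhaustiveness is exactly the deep content of \cite{BH01}, proved there via effective lower bounds for linear forms in logarithms together with the resolution of the underlying Thue and cyclotomic equations; I would simply confirm against their tables that for $\ell=3,5$ no sporadic pairs occur beyond the parametric families. The remaining bookkeeping, namely matching the branch label $\varepsilon=\pm 1$ and reconciling the $(a,b)$ and $(\alpha,\beta)$ normalizations, is routine once the equivalence relation $\alpha_1/\alpha_2=\beta_1/\beta_2\in\{\pm 1,\pm\sqrt{-1}\}$ on Lehmer pairs is taken into account.
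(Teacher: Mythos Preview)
Your proposal is correct and follows essentially the same approach as the paper: the paper does not give an independent proof but simply states that the lemma ``follows from \cite[Theorem 1.3]{BH01}'', and your plan is precisely to specialize that theorem to $\ell=3,5$ and transcribe the resulting parametric families. Your additional remarks on why the excluded values of $t$ and $k$ correspond to degenerate pairs, and on checking that the Fibonacci/Lucas parameters yield genuine Lehmer pairs, go beyond what the paper records but are consistent with it.
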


\begin{thma}[{\cite[Theorems 1 and 3]{CO64}}]\label{thmCO} For an integer $k\geq 0$, let $F_k$ (resp. $L_k$) denote the $k$-th Fibonacci (resp. Lucas) number. Then
\begin{itemize}
\item[(i)] if $L_k=x^2$, then $k=1,3$;
\item[(ii)] if $F_k=x^2$, then $k=0, 1,2,12$.
\end{itemize}
\end{thma}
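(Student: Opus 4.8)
This is the classical theorem of J.~H.~E. Cohn, quoted here from \cite{CO64}; the plan is to reproduce its elementary proof, which runs on Fibonacci/Lucas identities, periodicity, and quadratic residues. I would first assemble the toolkit: $F_{2k}=F_kL_k$, $L_{2k}=L_k^2-2(-1)^k$, $L_{3k}=L_k\bigl(L_k^2-3(-1)^k\bigr)$, $F_{3k}=F_k\bigl(5F_k^2+3(-1)^k\bigr)$, $L_k^2-5F_k^2=4(-1)^k$, the fact that $\gcd(F_k,L_k)\in\{1,2\}$ (it equals $2$ exactly when $3\mid k$), the rule $L_m\mid L_k$ iff $m\mid k$ with $k/m$ odd, and the pure periodicity of $(F_k\bmod N)_k$ and $(L_k\bmod N)_k$ for every $N$; in particular $F_k\bmod4$ and $L_k\bmod4$ both have period $6$.

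For part (i): after checking $k\le3$ by hand, I argue that $k$ must be odd, since for even $k=2m$ the relation $x^2=L_{2m}=L_m^2-2(-1)^m$ gives $|L_m^2-x^2|=2$, which is impossible because $x-L_m$ and $x+L_m$ have the same parity. With $k$ odd, the period-$6$ table of $L_k\bmod4$ forces $k\equiv1$ or $3\pmod6$ (squares being $\equiv0,1\pmod4$). If $k\equiv3\pmod6$, write $k=3\ell$ with $\ell$ odd; then $L_k=L_\ell(L_\ell^2+3)$ with the two factors coprime ($\gcd$ divides $3$ and $3\nmid L_\ell$ for $\ell$ odd), so each is a perfect square, and $L_\ell=\square$ forces $\ell\in\{1,3\}$ by descent on the smaller odd index; then $L_\ell^2+3=\square$ kills $\ell=3$, leaving $k=3$. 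The remaining class $k\equiv1\pmod6$, $k>1$, I rule out by a quadratic-residue obstruction: choose a modulus — a suitable divisor of a Lucas number $L_{2\cdot3^{a}}$ — so that the corresponding Pisano period places $L_k$ in a fixed non-residue class, contradicting $L_k=x^2$. Hence $k\in\{1,3\}$.

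For part (ii): the indices $n\in\{0,1,2,12\}$ give squares directly. For even $n=2m$ use $F_n=F_mL_m=x^2$. If $3\nmid m$ then $\gcd(F_m,L_m)=1$, so $F_m$ and $L_m$ are both squares; $L_m=\square$ gives $m\in\{1,3\}$ by (i), and since $F_6=8$ is not a square, $n=2$. If $3\mid m$ then $\gcd(F_m,L_m)=2$, so after cancelling $2$ one needs $L_m/2=\square$; a descent using the $L_{3k}$ identity and tracking the power of $3$ in $m$ then pins $m\in\{0,6\}$, i.e.\ $n\in\{0,12\}$ (indeed $F_{12}=F_6L_6=8\cdot18=144$). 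For odd $n$, the period-$6$ table of $F_k\bmod4$ shows $F_n\equiv2\pmod4$ exactly when $n\equiv3\pmod6$, so $n\equiv\pm1\pmod6$, and a quadratic-residue argument modulo a fixed small prime (chosen via its Pisano period) eliminates every odd $n>1$. Collecting cases gives $n\in\{0,1,2,12\}$.

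I expect the main obstacle to be, in both parts, the treatment of the residue classes $k\equiv1\pmod6$ (Lucas) and $n\equiv\pm1\pmod6$ (Fibonacci): parity and the multiplicative identities do not reach them, and one must instead find a modulus whose Pisano period forces the relevant term into a prescribed quadratic non-residue class — selecting that modulus is the genuinely clever step, the rest being routine. (Alternatively, since $L_k=F_{2k}/F_k$ exhibits the Lucas numbers as the companion sequence of the Fibonacci pair, one may instead invoke Theorem~\ref{thmBH} in the form of Carmichael's primitive-divisor theorem for Fibonacci numbers and finish with a short check of the few exceptional indices; this is quicker but less self-contained.)
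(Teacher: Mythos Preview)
The paper does not prove this statement at all: Theorem~\ref{thmCO} is quoted verbatim from Cohn's 1964 paper \cite{CO64} and used as a black box in the proof of Proposition~\ref{propq}. There is therefore no ``paper's own proof'' to compare your proposal against; the authors simply cite the result and move on.

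Your plan is a faithful outline of Cohn's original elementary argument, and the strategy is sound. A few remarks on the sketch itself. In part~(ii), the even case with $3\mid m$ requires \emph{both} $F_m/2$ and $L_m/2$ to be squares (since $\gcd(F_m/2,L_m/2)=1$ and their product is a square), not just $L_m/2$; you will need Cohn's companion results that $L_m=2\square$ only for $m\in\{0,6\}$ and $F_m=2\square$ only for $m\in\{0,3,6\}$, which are proved by the same descent technique. More importantly, the steps you flag as the ``main obstacle'' --- eliminating $k\equiv1\pmod6$ in~(i) and odd $n\equiv\pm1\pmod6$ in~(ii) via a quadratic-residue obstruction --- are left essentially unwritten. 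In Cohn's paper the device is the congruence $L_{k}\equiv -L_{k-2\cdot3^r}\pmod{L_{3^r}}$ (and its Fibonacci analogue), which lets one reduce any such index to one of a finite list while tracking the Jacobi symbol; without making this explicit your proposal is a plan rather than a proof. The alternative you mention, invoking primitive divisors, also works but is anachronistic relative to \cite{CO64}.
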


We deduce the following lemma from \cite[Theorem 3.3]{KK15}.
\begin{lem}\label{lemKK}
Let $F_k$ be as in Theorem \ref{thmCO}. If $F_k=5x^2$, then $k=5$. 
\end{lem}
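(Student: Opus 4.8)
The plan is to reduce Lemma \ref{lemKK} to Cohn's classification of square Fibonacci numbers (Theorem \ref{thmCO}(ii)) by a Cohn-type descent; this is exactly what one reads off from \cite[Theorem 3.3]{KK15}, but I would be equally happy to argue it directly. Note first that one should assume $x\ge 1$, since $k=0$, $x=0$ is the trivial identity $F_0=0=5\cdot 0^{2}$.

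First I would record that the rank of apparition of $5$ in the Fibonacci sequence equals $5$, i.e. $5\mid F_k$ if and only if $5\mid k$ (the values $F_1,\dots,F_5=1,1,2,3,5$ give the base case, and the rank-of-apparition property of Lucas sequences does the rest). Since $F_k=5x^{2}$ forces $5\mid F_k$, we may write $k=5m$.

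The key algebraic input is the multiplication formula
\[
\frac{F_{5m}}{F_m}=25F_m^{4}+25(-1)^{m}F_m^{2}+5=5\bigl(5F_m^{4}+5(-1)^{m}F_m^{2}+1\bigr),
\]
which I would derive from Binet's formula: writing $u=\alpha^{m}$, $v=\beta^{m}$ one has $uv=(-1)^{m}$ and $u-v=\sqrt5\,F_m$, and expanding $(u^{5}-v^{5})/(u-v)=u^{4}+u^{3}v+u^{2}v^{2}+uv^{3}+v^{4}$ in terms of $u-v$ and $uv$ produces the stated expression. Substituting into $F_{5m}=5x^{2}$ and cancelling $5$ gives
\[
x^{2}=F_m\bigl(5F_m^{4}+5(-1)^{m}F_m^{2}+1\bigr).
\]
For $m\ge 1$ both factors on the right are positive, and they are coprime since any common divisor divides the constant term $1$; hence each factor is a perfect square, and in particular $F_m$ is a perfect square.

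By Theorem \ref{thmCO}(ii) this leaves only $m\in\{0,1,2,12\}$. The value $m=0$ is excluded by $x\ge 1$; $m=1$ gives $k=5$ and indeed $F_5=5=5\cdot 1^{2}$; and for $m=2$ and $m=12$ I would simply check that the cofactor $5F_m^{4}+5(-1)^{m}F_m^{2}+1$ fails to be a square — it equals $11$ when $m=2$, and for $m=12$ (so $F_{12}=144$) it equals $2\,150\,012\,161$, which lies strictly between $46368^{2}$ and $46369^{2}$. This forces $k=5$, as claimed. The only steps requiring any care are keeping track of the sign $(-1)^{m}$ and the arithmetic in the $m=12$ case; everything else is routine, and in any case the lemma is precisely the special case "$F_k=5x^{2}$" of \cite[Theorem 3.3]{KK15}.
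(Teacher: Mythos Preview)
Your argument is correct. The paper itself does not supply a proof of this lemma at all; it simply records that the statement is a special case of \cite[Theorem~3.3]{KK15} and moves on. What you have written is a clean, self-contained derivation of that special case: you use the rank of apparition of $5$ to write $k=5m$, invoke the explicit factorisation $F_{5m}=5F_m\bigl(5F_m^{4}+5(-1)^{m}F_m^{2}+1\bigr)$ coming from Binet, observe that the two factors are coprime so each is a square, and then finish with Cohn's Theorem~\ref{thmCO}(ii) plus two numerical checks. All of this is sound, including the arithmetic for $m=12$ (indeed $46368^{2}=2149991424<2150012161<2150084161=46369^{2}$).

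The only point worth flagging is one you already raised: the lemma as stated in the paper is literally false at $k=0$, $x=0$, so an implicit hypothesis $x\ge 1$ (equivalently $k\ge 1$) is needed. In the paper's application the quantity playing the role of $5x^{2}$ is $v^{2}=5^{2m}$ with $m\ge 1$, so this is harmless there; your making it explicit is the right call.
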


We also recall the following lemma from \cite[Lemma 2.1]{HO20}.
\begin{lem}\label{lemfl} Let $F_k$ and $L_k$ be as in Theorem \ref{thmCO}.
 Then for $\varepsilon=\pm 1$,
\begin{itemize}
\item[(i)] $4F_k-F_{k-2\varepsilon}=L_{k+\varepsilon}$,
\item[(ii)] $4L_k-L_{k-2\varepsilon}=5F_{k+\varepsilon}$
\end{itemize}
\end{lem}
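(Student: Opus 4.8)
The plan is to derive both identities directly from Binet's formulas. Write $\phi=\tfrac{1+\sqrt5}{2}$ and $\psi=\tfrac{1-\sqrt5}{2}$ for the two roots of $x^2=x+1$, so that $\phi\psi=-1$, $\phi-\psi=\sqrt5$, and for every integer $n$ one has the closed forms $F_n=(\phi^n-\psi^n)/\sqrt5$ and $L_n=\phi^n+\psi^n$, valid for negative indices as well, which is precisely what lets us treat $\varepsilon=\pm1$ on an equal footing. Substituting these expressions, each of (i) and (ii) becomes a linear relation among the four quantities $\phi^{k},\phi^{k-2\varepsilon},\psi^{k},\psi^{k-2\varepsilon}$, so it suffices to establish a single pair of scalar relations governing the powers of $\phi$ and of $\psi$ separately.

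Concretely, the whole lemma reduces to the two elementary identities
\[
4\phi^{2\varepsilon}-1=\sqrt5\,\phi^{3\varepsilon},\qquad 4\psi^{2\varepsilon}-1=-\sqrt5\,\psi^{3\varepsilon},
\]
valid for $\varepsilon=\pm1$. Multiplying the first by $\phi^{k-2\varepsilon}$ and the second by $\psi^{k-2\varepsilon}$ turns them into $4\phi^{k}-\phi^{k-2\varepsilon}=\sqrt5\,\phi^{k+\varepsilon}$ and $4\psi^{k}-\psi^{k-2\varepsilon}=-\sqrt5\,\psi^{k+\varepsilon}$. For $\varepsilon=1$ these follow immediately from $\phi^2=\phi+1$, $\phi^3=2\phi+1$ and $\sqrt5=2\phi-1$, together with the mirror relations $\psi^2=\psi+1$, $\psi^3=2\psi+1$, $\sqrt5=1-2\psi$; the case $\varepsilon=-1$ I would deduce from the case $\varepsilon=1$ using $\phi^{-1}=-\psi$ and $\psi^{-1}=-\phi$, which simply interchanges the $\phi$- and $\psi$-relations.

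With these two relations in hand, subtracting the $\psi$-relation from the $\phi$-relation and dividing through by $\sqrt5$ yields $4F_k-F_{k-2\varepsilon}=\phi^{k+\varepsilon}+\psi^{k+\varepsilon}=L_{k+\varepsilon}$, which is (i), while adding them gives $4L_k-L_{k-2\varepsilon}=\sqrt5\,(\phi^{k+\varepsilon}-\psi^{k+\varepsilon})=5F_{k+\varepsilon}$, which is (ii). An alternative, purely recursive route would instead start from the standard identities $L_n=F_{n-1}+F_{n+1}$ and $5F_n=L_{n-1}+L_{n+1}$ and repeatedly apply $F_{n+2}=F_{n+1}+F_n$ to reduce both sides of each claim to the same linear combination of two consecutive terms; this costs four short case checks (two identities times $\varepsilon=\pm1$). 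There is no serious obstacle here, as the content is entirely elementary; the only point demanding a little care is the uniform handling of the two values of $\varepsilon$ and of the possibly negative indices $k-2\varepsilon$ and $k+\varepsilon$, which the Binet formulation sidesteps cleanly since its closed forms hold for all integer indices and the substitution $\phi^{-1}=-\psi$ collapses $\varepsilon=-1$ into $\varepsilon=1$ by symmetry.
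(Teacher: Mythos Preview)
Your argument via Binet's formulas is correct: the two scalar identities $4\phi^{2\varepsilon}-1=\sqrt5\,\phi^{3\varepsilon}$ and $4\psi^{2\varepsilon}-1=-\sqrt5\,\psi^{3\varepsilon}$ check out for both values of $\varepsilon$ (and your observation that $\phi^{-1}=-\psi$ reduces $\varepsilon=-1$ to $\varepsilon=1$ is the cleanest way to handle this), and the subsequent subtraction/addition steps recover (i) and (ii) exactly as you describe.

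As for comparison with the paper: there is nothing to compare against, since the paper does not prove this lemma at all but simply quotes it from \cite[Lemma~2.1]{HO20}. Your Binet-formula computation (or the alternative recursive route you sketch) is a perfectly self-contained replacement for that citation. The only small remark is that the paper's statement implicitly restricts to $k\geq 0$, so indices like $k-2\varepsilon$ could in principle be negative; your use of the Binet closed forms, valid for all integer indices, handles this without further comment.
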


We deduce the following lemma from  \cite[Corollary 3.1]{YU05}.
\begin{lem}\label{lemYU}
Let $C$ be a square-free positive integer. Assume that $t$ is a positive odd integer such that $\gcd(t, h(-C))=1$. Then all the positive integer solutions $(X,Y,Z)$ of the equation 
\begin{equation*}\label{eqyu}
CX^2+Y^2=4Z^n,~~ \gcd(CX, Y)=1,
\end{equation*} 
can be expressed as 
$$\frac{X\sqrt{C}+Y\sqrt{-1}}{2}=\delta\left(\frac{U\sqrt{C}+\mu V\sqrt{-1}}{2}\right)^t,$$
where $U$ and $V$ are positive integers satisfying $4Z=CU^2+V^2$, $\gcd(UC, V)=1$, and $\delta, \mu\in\{-1, 1\}$ when $C>3$. For $C=3$, $\delta\in\{-1, 1, (1+\sqrt{-C})/2, (-1+\sqrt{-C})/2, (1-\sqrt{-C})/2,(-1-\sqrt{-C})/2 \}$ and $\mu\{-1, 1\}$.
\end{lem}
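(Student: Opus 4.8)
The plan is to work inside the ring of integers $\mathcal O$ of the imaginary quadratic field $K=\mathbb{Q}(\sqrt{-C})$ and run the standard ideal-factorisation-plus-class-number descent, taking the exponent to be the odd integer appearing in the equation (which I write $n$, so that the hypothesis reads $n$ odd and $\gcd(n,h(-C))=1$). First I would reduce the equation modulo $4$: the right-hand side is divisible by $4$, and $\gcd(CX,Y)=1$ forces $\gcd(X,Y)=1$, so a short case check on the residues of $CX^2+Y^2\pmod 4$ shows that a solution can exist only when $C\equiv 3\pmod 4$ and $X,Y$ are \emph{both odd}. In that regime $\mathcal O=\mathbb{Z}\!\left[\frac{1+\sqrt{-C}}{2}\right]$, the element $\alpha=\frac{Y+X\sqrt{-C}}{2}$ is genuinely an algebraic integer, and $N(\alpha)=\alpha\bar\alpha=\frac{Y^2+CX^2}{4}=Z^{n}$. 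Hence, as ideals, $(\alpha)(\bar\alpha)=(Z)^{n}$.

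Next I would show that the ideals $(\alpha)$ and $(\bar\alpha)$ are coprime. Any common prime ideal $\mathfrak p$ divides $\alpha+\bar\alpha=Y$ and $\alpha-\bar\alpha=X\sqrt{-C}$, hence divides $(X)$ or $(\sqrt{-C})$. The first case forces the rational prime below $\mathfrak p$ to divide $\gcd(X,Y)=1$, and the second forces it to divide $\gcd(C,Y)=1$; both are impossible (and since $Y$ is odd the prime $2$ never intervenes). With coprimality established, unique factorisation of ideals forces each factor to be a perfect $n$-th power, so $(\alpha)=\mathfrak I^{\,n}$ for some integral ideal $\mathfrak I$. Passing to the ideal class group, $[\mathfrak I]^{n}=[(\alpha)]$ is trivial; writing $an+b\,h(-C)=1$ (possible because $\gcd(n,h(-C))=1$) gives $[\mathfrak I]=[\mathfrak I]^{\,an+b\,h(-C)}=1$, so $\mathfrak I=(\gamma)$ is principal.

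Finally I would recover the explicit shape. From $(\alpha)=(\gamma^{n})$ one gets $\alpha=\varepsilon\gamma^{n}$ for a unit $\varepsilon\in\mathcal O^{\times}$. For $C>3$ the only units are $\pm1$, and since $n$ is odd these are absorbed by adjusting the sign of $\gamma$; writing the generator in the normalised form with $U,V>0$ and keeping a residual sign $\delta\in\{\pm1\}$ together with $\mu\in\{\pm1\}$ to record the sign of the $\sqrt{-C}$-coefficient yields exactly the claimed identity. Taking norms of the generator gives $\frac{CU^{2}+V^{2}}{4}=Z$, i.e. $4Z=CU^{2}+V^{2}$ with $\gcd(UC,V)=1$. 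The cosmetic appearance of $\sqrt{-1}$ in the displayed formula is merely the identity $X\sqrt{C}+Y\sqrt{-1}=\sqrt{-1}\,(Y-X\sqrt{-C})$, so multiplying by $\sqrt{-1}$ turns $\alpha$ into the written form; for $C=3$ the unit group is the six sixth-roots of unity, which is precisely why $\delta$ then ranges over the enlarged list.

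The step I expect to be the genuine point of care is the unit and normalisation bookkeeping rather than the descent itself: one must confirm that odd $n$ lets every admissible unit be written as $\delta$ times an $n$-th power of a unit, so that $\delta$ can be taken from $\{\pm1\}$ (respectively the six roots of unity when $C=3$) without loss, and that the sign choices $\delta,\mu$ suffice to impose $U,V>0$. Everything else — the congruence reduction, the coprimality, and the class-group argument — is routine. Alternatively, and most quickly, the statement is exactly the specialisation of Yuan's Corollary~3.1 in \cite{YU05} to the present normalisation, so one may simply cite that result after checking that the hypotheses ($C$ square-free, $n$ odd, $\gcd(n,h(-C))=1$) match.
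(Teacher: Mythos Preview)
Your proposal is correct, and in fact you do more than the paper does: the paper offers no proof at all for this lemma, merely stating that it is deduced from \cite[Corollary~3.1]{YU05}. Your final paragraph already anticipates this, and the detailed ideal-theoretic descent you sketch is precisely the standard argument underlying Yuan's result, so nothing further is required.
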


We recall the following lemma from a result of Ljunggren \cite{LJ43}.
\begin{lem}\label{lemLJ} Let $C\geq 3$ be a square-free integer and $n\geq 3$ an integer such that $\gcd(n,2h(-C))=1$.  Then the Diophantine equation   
$$
CX^2+1=4Y^n, ~~X\geq 1, Y>1, \gcd(X,Y)=1,$$
has  no solutions.
\end{lem}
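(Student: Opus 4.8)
The plan is to carry out the standard descent over the imaginary quadratic field $\mathbb{Q}(\sqrt{-C})$ and then feed the resulting pair into the primitive‑divisor machinery assembled above; throughout I assume $C>3$, the case $C=3$ needing only routine extra bookkeeping for the two additional units (and being in any case classical). Suppose $(X,Y,n)$ is a solution and fix a prime $q\mid n$; from $\gcd(n,2h(-C))=1$ the integer $q$ is odd with $\gcd(q,h(-C))=1$. Writing the equation as $CX^{2}+1^{2}=4Z^{q}$ with $Z:=Y^{n/q}\ge 2$ (this is where $Y>1$ enters) and noting $\gcd(CX,1)=1$, I would apply Lemma \ref{lemYU} to obtain positive integers $U,V$ with $4Z=CU^{2}+V^{2}$, $\gcd(UC,V)=1$, and signs $\delta,\mu\in\{-1,1\}$ such that
$$\frac{X\sqrt{C}+\sqrt{-1}}{2}=\delta\left(\frac{U\sqrt{C}+\mu V\sqrt{-1}}{2}\right)^{q}.$$
Setting $\alpha=\tfrac12(U\sqrt{C}+\mu V\sqrt{-1})$ and $\beta=\overline{\alpha}$, one has $(\alpha+\beta)^{2}=CU^{2}$ and $\alpha\beta=Z=(CU^{2}+V^{2})/4$; these are non‑zero coprime rational integers (a common prime factor would divide $V^{2}$ and hence, since $\gcd(UC,V)=1$, would divide $U$), and $\alpha/\beta=\alpha/\overline{\alpha}$ is not a root of unity because for $C>3$ the only roots of unity in $\mathbb{Q}(\sqrt{-C})$ are $\pm1$ while $\alpha$ is neither real (imaginary part $\mu V/2\ne 0$) nor purely imaginary (real part $U\sqrt{C}/2\ne 0$). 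So $(\alpha,\beta)$ is a Lehmer pair with parameter $(a,b)=(CU^{2},-V^{2})$.

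The next step is to read off the Lehmer number. Raising the displayed identity to the $q$‑th power gives $\alpha^{q}=\delta\cdot\tfrac12(X\sqrt{C}+\sqrt{-1})$ and, by conjugation, $\beta^{q}=\delta\cdot\tfrac12(X\sqrt{C}-\sqrt{-1})$, so $\alpha^{q}-\beta^{q}=\delta\sqrt{-1}$, while $\alpha-\beta=\mu V\sqrt{-1}$. Since $q$ is odd,
$$\mathcal{L}_{q}(\alpha,\beta)=\frac{\alpha^{q}-\beta^{q}}{\alpha-\beta}=\frac{\delta}{\mu V}.$$
As every Lehmer number is a non‑zero rational integer, this forces $V=1$ and $\mathcal{L}_{q}(\alpha,\beta)=\delta\mu=\pm1$; in particular the parameter of the pair is $(a,b)=(CU^{2},-1)$ and $\mathcal{L}_{q}(\alpha,\beta)$ has no primitive divisor, having no prime divisor at all.

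It then remains to contradict this last fact. If $q>30$ the contradiction is immediate from Theorem \ref{thmBH}. If $q\in\{11,17,19,23,29\}$ it contradicts Lemma \ref{lemVO}, whose list of exceptional parameters is empty for those primes. If $q\in\{7,13\}$, then up to equivalence — which replaces $(a,b)$ only by $(-a,-b)$ — the parameter would occur in the explicit list of Lemma \ref{lemVO}, yet every listed $b$ satisfies $|b|\ge 3$, whereas ours has $|b|=1$. If $q=3$, Lemma \ref{lemBH}(i) restricts $b$ to the shapes $1-3t$ (with $t\ne 0$) and $3^{k}-3t$ (with $3\nmid t$), and $|b|=1$ is impossible: $1-3t=\pm1$ forces $t\in\{0,\tfrac23\}$, while $3^{k}-3t\equiv 0\pmod 3$ for $k\ge 1$. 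Finally, when $q=5$, Lemma \ref{lemBH}(ii) (taking the orientation with $a=CU^{2}>0$, so $b=-1$) gives either $4F_{k}-F_{k-2\varepsilon}=1$ with $k\ge 3$ or $4L_{k}-L_{k-2\varepsilon}=1$; by Lemma \ref{lemfl} these read $L_{k+\varepsilon}=1$ and $5F_{k+\varepsilon}=1$ respectively, the first impossible since $k+\varepsilon\ge 2$ gives $L_{k+\varepsilon}\ge L_{2}=3$, the second since $5\mid 5F_{k+\varepsilon}$. Every case is contradictory, so $CX^{2}+1=4Y^{n}$ has no solution with $Y>1$, which is the assertion of the lemma.

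The step I expect to require the most care is $q=5$: it is the only place where a genuine Fibonacci–Lucas computation is needed, and one must keep track of the admissible ranges of $k$ and the sign $\varepsilon$ in Lemma \ref{lemfl} (Theorem \ref{thmCO} and Lemma \ref{lemKK} would also be invoked here if one did not first reduce to $V=1$, as happens for the more general equation \eqref{eqn1}). The other point deserving separate treatment is $C=3$, where Lemma \ref{lemYU} additionally permits the six units of $\mathbb{Q}(\sqrt{-3})$ as values of $\delta$; there one repeats the computation of $\mathcal{L}_{q}$ after absorbing such a unit into the pair — equivalently, by factoring $3X^{2}+1$ in $\mathbb{Z}[(1+\sqrt{-3})/2]$ and using that its unit group has order $6$ — and the same primitive‑divisor dichotomy closes the argument, the degenerate possibility $Z=1$ (i.e.\ $4=CU^{2}+V^{2}$ with $C=3$, $U=V=1$, for which $\alpha/\beta$ is a $6$‑th root of unity) being precisely the $Y=1$ case excluded by hypothesis.
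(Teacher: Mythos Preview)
The paper does not give a proof of this lemma at all: it is simply quoted from Ljunggren's 1943 paper \cite{LJ43}. What you have written is therefore not a comparison target but an independent proof, and it is essentially correct.

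Your argument is in fact the specialisation of the paper's own Proposition~\ref{propq} to the degenerate input $p^{m}=1$: applying Lemma~\ref{lemYU} yields a Lehmer pair with parameter $(CU^{2},-V^{2})$, the imaginary part forces $V\mid 1$, hence $V=1$ and $\mathcal{L}_{q}(\alpha,\beta)=\pm1$ has no primitive divisor, which you then rule out prime by prime via Theorem~\ref{thmBH} and Lemmas~\ref{lemVO}--\ref{lemBH}. The case analyses for $q=3,5,7,13$ are handled correctly; in particular your observation that $|b|=1$ never appears among the exceptional parameters is the clean way to dispose of $q\in\{7,13\}$, and the $q=5$ reduction via Lemma~\ref{lemfl} to $L_{k+\varepsilon}=1$ (impossible for $k+\varepsilon\ge 2$) or $5F_{k+\varepsilon}=1$ is sound. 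Two cosmetic points: the phrase ``raising the displayed identity to the $q$-th power'' is a slip, since the identity already gives $\alpha^{q}$ directly; and in the $q=3$ discussion you should note that $k=0$ in the second family $(3^{k}+t,3^{k}-3t)$ collapses to the first family, so that your ``$k\ge 1$'' restriction loses nothing.

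What this buys over the paper's approach is self-containment: you avoid invoking an external 1943 reference and instead derive the lemma from the same primitive-divisor toolkit used elsewhere in the paper. Ljunggren's original proof, by contrast, predates the Bilu--Hanrot--Voutier theorem by decades and proceeds by direct algebraic manipulation in $\mathbb{Q}(\sqrt{-C})$; it is more elementary but less uniform. Your route also makes transparent why the lemma is the $m=0$ boundary case of Proposition~\ref{propq}, which the paper uses but does not remark on.
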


\section{Proof of Theorem \ref{thm}}
Here, we first prove the following crucial proposition which is the main ingredient in the proof of Theorem \ref{thm}.
\begin{prop}\label{propq}
Let $c>3$ be a square-free integer. Assume that $p$ and $q$ are odd primes such that $q\nmid h(-c)$. Then the Diophantine equation
\begin{equation}\label{eqq1}
cx^2+p^{2m}=4y^q, ~~\gcd(x,y)=1,
\end{equation}
has no positive integer solution, except for 
$p^m=|\mathcal{I}(u,1,q)|/2^{q-1}$. Further for $p^m=|\mathcal{I}(u,1,q)|/2^{q-1}$, its possible solutions are given by 
$$(x, y)=\left(\frac{u|\mathcal{R}(u,1,q)|}{2^{q-1}}, \frac{u^2c+1}{4}\right),$$
where $u\geq 1$ is an odd integer.
\end{prop}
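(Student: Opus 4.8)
The natural approach is to combine the parametrization of Lemma~\ref{lemYU} with an explicit binomial expansion; the functions $\mathcal R$ and $\mathcal I$ are tailored precisely for the bookkeeping in that expansion.

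\emph{Reductions and the parametrization.} Since $p$ is odd, reading \eqref{eqq1} modulo $2$ forces $c$ to be odd, and then modulo $8$ forces $c\equiv 3\pmod 4$ together with $x$ odd. From $\gcd(x,y)=1$ and $p\nmid c$ one checks $p\nmid x$ and $\gcd(c,y)=1$, so $\gcd(cx,p^{m})=1$. As $q$ is odd, $q\nmid h(-c)$ and $c>3$, Lemma~\ref{lemYU} applies with $C=c$, $X=x$, $Y=p^{m}$, $Z=y$ and exponent $q$, producing positive integers $u,v$ and signs $\delta,\mu\in\{\pm1\}$ with
\[
\frac{x\sqrt c+p^{m}\sqrt{-1}}{2}=\delta\left(\frac{u\sqrt c+\mu v\sqrt{-1}}{2}\right)^{q},
\qquad 4y=cu^{2}+v^{2},\qquad \gcd(uc,v)=1 .
\]
Since $c$ and $u$ are odd while $4y$ is even, the relation $4y=cu^{2}+v^{2}$ forces $v$ to be odd as well.

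\emph{The two coordinate equations.} Expanding $(u\sqrt c+\mu v\sqrt{-1})^{q}$ by the binomial theorem and grouping the terms carrying $\sqrt c$ (even index $2j$) and those carrying $\sqrt{-1}$ (odd index $2j+1$), one verifies directly that
\[
\left(\frac{u\sqrt c+\mu v\sqrt{-1}}{2}\right)^{q}
=\frac{u\,\mathcal R(c,u,v,q)}{2^{q}}\,\sqrt c+\frac{\mu v\,\mathcal I(c,u,v,q)}{2^{q}}\,\sqrt{-1},
\]
using that $\mathcal R$ and $\mathcal I$ depend on $v$ only through $v^{2}$. Comparing the $\sqrt c$- and $\sqrt{-1}$-components and taking absolute values (which absorbs $\delta$ and $\mu$) yields
\[
2^{q-1}x=u\,|\mathcal R(c,u,v,q)|,\qquad 2^{q-1}p^{m}=v\,|\mathcal I(c,u,v,q)| .
\]

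\emph{Determining $v$, and the conclusion.} This is the heart of the matter. Reducing the defining sum of $\mathcal I(c,u,v,q)$ modulo $v$ kills every term with $j\ge 1$, so $\mathcal I(c,u,v,q)\equiv q\,u^{q-1}c^{(q-1)/2}\pmod v$; since $\gcd(uc,v)=1$ this gives $\gcd\bigl(v,\mathcal I(c,u,v,q)\bigr)=\gcd(v,q)$. Feeding this into $2^{q-1}p^{m}=v\,|\mathcal I(c,u,v,q)|$ together with $v$ odd forces $v\mid p^{m}$, say $v=p^{b}$ with $0\le b\le m$; and if $b\ge1$ with $p\ne q$ the same congruence modulo $p^{2b}$ shows $p\nmid\mathcal I(c,u,p^{b},q)$, so that necessarily $m=b$ and $|\mathcal I(c,u,p^{m},q)|=2^{q-1}$. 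The plan is to eliminate this last possibility (and the degenerate case $p=q$) by combining the mod-$q$ congruence \eqref{eqi} for $\mathcal I$ with the size estimate $v\,|\mathcal I(c,u,v,q)|=|\operatorname{Im}((u\sqrt c+v\sqrt{-1})^{q})|\le (cu^{2}+v^{2})^{q/2}=(4y)^{q/2}$, played against $4y^{q}=cx^{2}+p^{2m}$, so as to force $b=0$. Once $v=1$, the displayed equations read $2^{q-1}p^{m}=|\mathcal I(c,u,1,q)|$ and $x=u\,|\mathcal R(c,u,1,q)|/2^{q-1}$, while $4y=cu^{2}+1$ gives $y=(cu^{2}+1)/4$ — exactly the asserted description; conversely, that such a pair $(x,y)$ solves \eqref{eqq1} follows from the norm identity $(cu^{2}+v^{2})^{q}=c\,u^{2}\mathcal R(c,u,v,q)^{2}+v^{2}\mathcal I(c,u,v,q)^{2}$ specialised to $v=1$.

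\emph{Main obstacle.} The decisive difficulty is the last part of the third step: the elementary $\gcd$ argument only narrows $v$ down to a power of $p$, and a clean elimination of $v=p^{b}$ with $b\ge1$ needs a genuinely analytic (or primitive‑divisor flavoured) input beyond pure divisibility, together with a careful separate treatment of $p=q$. Everything else — the congruence reductions, the applicability of Lemma~\ref{lemYU}, and the binomial bookkeeping that manufactures $\mathcal R$ and $\mathcal I$ — is routine.
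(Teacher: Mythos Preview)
Your self-diagnosis is accurate: the argument is incomplete precisely where you say, and the proposed remedy does not work. Squaring the size estimate $2^{q-1}p^{m}\le(4y)^{q/2}$ gives $p^{2m}\le 4y^{q}$, i.e.\ $cx^{2}\ge0$, which is vacuous; and combining $|\mathcal I|=2^{q-1}$ with the congruence \eqref{eqi} modulo $q$ yields, via Fermat, only $1\equiv\pm1\pmod q$. So neither the analytic bound nor the congruence eliminates $v=p^{b}$ with $b\ge1$.

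The paper closes the gap with the primitive-divisor theorem, as you half-anticipate. Put $\alpha=(u\sqrt c+\mu v\sqrt{-1})/2$ and $\beta$ its conjugate; then $(\alpha,\beta)$ is a Lehmer pair with parameters $(u^{2}c,-v^{2})$, and your imaginary-part identity reads $\mathcal L_{q}(\alpha,\beta)=\pm p^{m}/v$. Integrality of Lehmer numbers forces $v=p^{b}$. When $b\ge1$, the prime $p$ divides $(\alpha^{2}-\beta^{2})^{2}=-cu^{2}v^{2}$, so the sole possible prime factor of $\mathcal L_{q}(\alpha,\beta)=\pm p^{m-b}$ is not primitive; hence $\mathcal L_{q}$ has no primitive divisor. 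Theorem~\ref{thmBH} then disposes of $q>30$; Lemma~\ref{lemVO} disposes of $7\le q\le29$ since none of the tabulated parameters match the shape $(u^{2}c,-v^{2})$; and Lemma~\ref{lemBH} together with Theorem~\ref{thmCO} and Lemmas~\ref{lemKK}--\ref{lemfl} handles $q\in\{3,5\}$ by a short explicit check on Fibonacci and Lucas numbers. This leaves $v=1$, where your description is correct. Two further remarks: the Lehmer-number argument treats $p=q$ uniformly (the step ``$p\mid(\alpha^{2}-\beta^{2})^{2}$'' is insensitive to whether $p=q$), so the separate case you flag is unnecessary; and your congruence trick forcing $b=m$ when $p\ne q$ is a pleasant shortcut for the intermediate range $1\le b\le m-1$, but it cannot touch $b=m$ (where $|\mathcal L_{q}|=1$), which is exactly the case for which Bilu--Hanrot--Voutier is indispensable.
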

\begin{proof}[{\bf{Proof of Proposition \ref{propq}}}] 
Assume that $(c,p,x,y,m, n)$ is a positive integer solution of \eqref{eqq1}. Then $x$ is odd, and thus $c\equiv 3\pmod 4$. Also note that $p\nmid cx$ since $\gcd(x,y)=1$ and $c$ is square-free. As $q$ is odd prime and $q\nmid h(-c)$, so that by Lemma \ref{lemYU} we can positive integer $u$ and $v$ such that 
\begin{equation}\label{eqq2}
\frac{x\sqrt{c}+p^m\sqrt{-1}}{2}=\delta\left(\frac{u\sqrt{c}+\mu v\sqrt{-1}}{2}\right)^q
\end{equation}
 and 
 \begin{equation}\label{eqq3}
 4y=u^2c+v^2
 \end{equation}
 with $\gcd(uc,v)=1$, where $\delta, \mu\in\{-1, 1\}$.

Equation the real and imaginary parts in \eqref{eqq3}, we get
\begin{align}\label{eqq4}
\begin{cases}
x=\dfrac{\delta u}{2^{q-1}}\mathcal{R}(u,v,q),\vspace{2mm}\\
p^m=\dfrac{\delta \mu v}{2^{q-1}}\mathcal{I}(u,v,q).
\end{cases}
\end{align}
These imply both $u$ and $v$ are odd, since $x$ and $p$ are odd.

We now define,
$$\alpha=\frac{u\sqrt{c}+\mu v \sqrt{-1}}{2},~~ \beta=\frac{u\sqrt{c}-\mu v\sqrt{-1}}{2}.$$
Then one can show, using \eqref{eqq3}, that both $\alpha$ and $\beta$ are algebraic integers. Clearly, $\gcd(uc, y)=1$, and thus $(\alpha+\beta)^2=u^2c$ and $\alpha\beta=y$ are coprime. Also $$\frac{\alpha}{\beta}=\frac{(u^2c-v^2)/2+\mu u v\sqrt{-c}}{y}$$ is a root of $$yT^2-(u^2c-v^2)T+4y=0.$$ 
This shows that $\alpha/\beta$ is not a root of unity, since $\gcd(u^2c-v^2, y)=\gcd(v,y)=1$. Thus $(\alpha, \beta)$ is a Lehmer pair with the parameter $(u^2c, -v^2)$.  

Let $\mathcal{L}_t(\alpha,\beta)$ be the Lehmer number for the pair $\alpha, \beta$. Then $\mathcal{L}_q(\alpha, \beta)=p^m/\delta\mu v$. Since $\mathcal{L}_q(\alpha,\beta)$ is a non-zero rational integer and $p$ is an odd prime, we get the following three cases.
 
\subsection*{Case I} $v=p^m$. 
In this case, $|\mathcal{L}_q(\alpha, \beta)|=1$, and hence $\mathcal{L}_q(\alpha, \beta)$ has no primitive divisors. Therefore by Theorem \ref{thmBH} and Lemmas \ref{lemVO} and \ref{lemBH}, there is no Lehmer number
$\mathcal{L}_q(\alpha, \beta)$ which has no primitive divisors when $q\ne 3,5,7,13$. Since $(u^2c, v^2)$ is the parameter corresponds to the Lehmer pair $(\alpha, \beta)$, so that by Lemma \ref{lemVO}, $q=7,13$ are not possible. This ensures that \eqref{eqq1} has no positive integer solution, except for $q=3,5$. 

For $q=3$, by Lemma \ref{lemBH} (since $(u^2c, -v^2)$ is the parameter corresponds to $(\alpha, \beta)$), 
\begin{equation}\label{eqq31}
v^2= 3t-1
\end{equation}
and 
\begin{equation}\label{eqq32}
v^2=3t-3^k,
\end{equation}
where $t\ne 0$ and $k\geq 1$ are integers with $t\not\equiv 0\pmod 3$ and $(t, k)\ne (1,1)$.   Clearly, \eqref{eqq31} is not possible. Since $t\not\equiv 0\pmod 3$, so that  \eqref{eqq32} gives $k=1$ and thus $v\equiv 0\pmod 3$. This ensures that $v=3^m$, and thus equating the imaginary parts in \eqref{eqq2} for $q=3$, we get
$3u^2c-3^{2m}=\pm 4$. This is not possible.    

If $q=5$, then we get (by Lemma \ref{lemBH}),
\begin{equation}\label{eqq51}
(u^2c,v^2)=(F_{k-2\varepsilon}, 4F_k-F_{k-2\varepsilon}),~~k\geq 3, 
\end{equation}
and 
\begin{equation}\label{eqq52}
(u^2c,v^2)=(L_{k-2\varepsilon}, 4L_k-L_{k-2\varepsilon}), ~~k\ne 1,
\end{equation}
where $\varepsilon=\pm1$. Utilizing Lemma \ref{lemfl} and Theorem \ref{thmCO} in \eqref{eqq51}, we get $(k, \varepsilon)=(4,-1)$. Thus, $u^2c=F_7=13$ which gives $u=1$ and $c=13$. This is not possible since $c\equiv 3\pmod 4$. 
Again utilizing Lemma \ref{lemfl} in \eqref{eqq52}, we get $5F_{k+\varepsilon}=v^2$, which shows that $v=5^m$. Thus $F_{k+\varepsilon}=5\times 5^{2m-2}$, and hence by Lemma \ref{lemKK}, $(k, \varepsilon, m)\in\{(4,1,1), (6,-1,1)\}$. Therefore $u^2c=L_{k-2\varepsilon}$ gives $u^2c=3,47$, which further implies $(u, c)\in\{(1,3), (1,47)\}$. These lead to $3x^2+5^2=4\times 2^5$ and $47x^2+5^2=4\times 13^5$. It is easy to check that these are not possible.  

\subsection*{Case II} $v=p^r, 1\leq r\leq m-1$.  In this case, we get $|\mathcal{L}_q(\alpha, \beta)|=p^{m-r}$. Also $(\alpha^2-\beta^2)^2=-cu^2p^{2r}$. Since $r\ne 0$, so that $\mathcal{L}_q(\alpha, \beta)$ has no primitive divisors. We can follow case I to show that \eqref{eqq1} has no solution in this case too.  

\subsection*{Case III} $v=1$. In the case, $|\mathcal{L}_q(\alpha, \beta)|=p^m.$ Thus $p$ is a primitive divisor of $\mathcal{L}_q(\alpha, \beta)$, and hence the previous method is not utilizable. Therefore, we utilize \eqref{eqq4} to 
$$x=\frac{|u\mathcal{R}(c,u,1,q)|}{2^{q-1}}$$
and 
$$p^m=\frac{|\mathcal{I}(c,u,1,q)|}{2^{q-1}}.$$ 
Also \eqref{eqq3} gives $y=(u^2c+1)/4$. 
\end{proof}

\begin{proof}[{\bf Proof of Theorem \ref{thm}}]
Assume that $(c,x,y,m,n)$ is a positive integer solution of \eqref{eqn1}. Then as in Proposition \ref{propq}, $x$ is odd, $c\equiv 3\pmod 4$ and $p\nmid cx$. 

Suppose that $q$ is a prime factor of $n$. Then we can write \eqref{eqn1} as follows:
\begin{equation}\label{eqn2}
cx^2+p^{2m}=4Y^q, 
\end{equation}
where $Y=y^{n/q}$. Thus by Proposition \ref{propq}, \eqref{eqn2} has no solutions in positive integers provided $p^m\ne|\mathcal{I}(c,u,1,q)/2^{q-1}$. Therefore \eqref{eqn1} has no solutions in positive integers if $p^m\ne|\mathcal{I}(c,u,1,q)/2^{q-1}$. 

In rest of the proof, we consider $p^m=\mathcal{I}(c,u,1,q)/2^{q-1}$. Therefore by Proposition \ref{propq},  the only solution of \eqref{eqn2} is given by 
$$(x, Y)=\left(\frac{u|\mathcal{R}(c,u,1,n)|}{2^{n-1}}, \frac{u^2c+1}{4}\right).$$
In case of $n=q$, we can conclude that the only solution of \eqref{eqn1} is as follows:
$$(x, y)=\left(\frac{u|\mathcal{R}(c,u,1,n)|}{2^{n-1}}, \frac{u^2c+1}{4}\right),$$
where $u\geq 1$ is an odd integer.

Further, if $n=rq$ for some odd integer $r\geq 3$, then by Proposition \ref{propq}, we get 
\begin{equation}\label{eqn3}
cu^2+1=4y^r, ~~u\geq 1, y>1, \gcd(u,y)=1.
\end{equation}
Since $\gcd(r, h(-c))=1$, so that by Lemma \ref{lemLJ}, \eqref{eqn3} has no solution $(c,u,y,r)$ in positive integers. Therefore, \eqref{eqn1} has no solutions. This completes the proof. 
\end{proof}

\section*{acknowledgements}
This work was started when K. Chakraborty and A. Hoque were visited Professor Jianya Liu at Shandong University, Weihai, whereas it was completed while A. Hoque was visiting K. Srinivas at The Institute of Mathematical Sciences, Chennai. K. Chakraborty and A. Hoque are thankful to Professor Jianya Liu and Shandong University, Weihai for wonderful hospitality and support during their visiting period.  A. Hoque would like to thank The Institute of Mathematical Sciences, Chennai for hospitality and financial support during his visiting period. The authors are grateful to the anonymous referee for careful reading and valuable comments which have helped to improve this paper. The authors are also grateful to the referee for drawing the paper \cite{DGS20} to their attention.
This work is supported by the grants SERB MATRICS Project No. MTR/2017/001006 and SERB-NPDF (PDF/2017/001958), Govt. of India.

\end{document}